\newtheorem{theorem}{Theorem}[section]
\newtheorem{remark}{Remark}[section]
\newcommand{\Th}{\mathcal{T}_h}
\newcommand{\pol}{\mathcal{P}}
\begin{document}
\title[STFEM-PNP]{High-order
space-time finite element methods for the Poisson-Nernst-Planck equations:
positivity and unconditional energy stability}
\author{Guosheng Fu}
\address{Department of Applied and Computational Mathematics and 
Statistics, University of Notre Dame, USA.}
\email{gfu@nd.edu}
\author{Zhiliang Xu}
\address{Department of Applied and Computational Mathematics and 
Statistics, University of Notre Dame, USA.}
\email{zhiliangxu@nd.edu}
 \thanks{G. Fu acknowledge the partial support of this work
 from U.S. National Science Foundation through grant DMS-2012031.
 Z. Xu was partially supported by the NSF CDS\&E-MSS 1854779.
 }

\keywords{Poisson-Nernst-Planck equation; energy stability; positivity
preservation; entropy variable; space-time FEM}
\subjclass{65N30, 65N12, 76S05, 76D07}
\begin{abstract}
  We present a novel class of high-order
  space-time finite element schemes for the Poisson-Nernst-Planck (PNP)
  equations.  We prove that our schemes are mass conservative, positivity preserving, and
 unconditionally  energy stable for any order of approximation.
To the best of our knowledge, this is the first class of (arbitrarily) high-order
  accurate schemes for the PNP equations that simultaneously achieve all these
three  properties.

This is accomplished via (1) 
  using finite elements to directly approximate 
the so-called  {\it entropy variable} $u_i :=U'(c_i)=\log(c_i)$
instead of the density variable $c_i$, where 
  $U(c_i)=(\log(c_i)-1) c_i$ is the corresponding entropy, and 
  (2) using a discontinuous Galerkin (DG) discretization in time.
The {\it entropy variable} formulation, which was originally developed by Metti et al. \cite{Metti16} under the name of a {\it log-density formulation},
 guarantees both positivity of densities
$c_i = \exp(u_i)>0$ and a continuous-in-time
energy stability result.
The DG in time discretization further ensures an
unconditional energy stability in the fully discrete level 
for any approximation order, where the lowest order case is exactly the backward
Euler discretization and in this case we recover the method of Metti et al. \cite{Metti16}.
\end{abstract}
\maketitle

\section{Introduction}
\label{sec:intro}
The Poisson-Nernst-Planck (PNP) equations describe the diffusion of charged
particles under the effect of an electric field that is itself affected by these
particles. This system of equations has been widely used in the modelling of
semiconductors \cite{Jerome96} and ion channels in biology \cite{Hille01}.

Various numerical methods with different properties have been developed for the
PNP equations
\cite{Bank83, Prohl09, Lu10, Zheng11, Flavell14, Meng14}. 
We particularly cite the very recent schemes 
\cite{Metti16, %LiuWang17, 
  Hu20, Liu20, ShenXu20,
LiuWang20, HuangShen21} that were provably 
positivity preserving for the  particle densities and 
unconditionally energy dissipative for the free energy, among which
%Among these positivity preserving and unconditionally energy dissipative schemes, 
the schemes in \cite{Metti16, Hu20, Liu20, ShenXu20,
LiuWang20} are first-order accurate in time, while the scheme in \cite{HuangShen21}
is  high-order accurate in time but the associated
energy dissipation law is only valid for a {\it modified energy} due to the use of
the recent scalar auxiliary variable (SAV) technique \cite{ShenXuYang19}.
%
%, which
%is based on applying the recent SAV technique \cite{ShenXuYang19} to a transformed system with
%a suitable function transformation, 
%but the associated energy dissipation is only valid for a {\it modified energy}.
To the best of our knowledge, no provable positivity
preserving and unconditionally energy dissipative scheme for the {\it original
energy} that is at least
second-order accurate in time exists so far. 
We fill this gap by presenting a class of
arbitrarily high-order accurate  space-time finite element (STFEM) schemes satisfying these
properties. Our spatial discretization is the same as the one developed in 
\cite{Metti16} which is
based on a {\it log-density formulation}.
%
%In this paper, we shall construct high-order finite element schemes for the PNP
%equations that are 
%mass conservative, positivity preserving, and unconditionally energy dissipative.
%  The major novelty of our schemes is to use finite elements to directly 
%  approximate the so-called  {\it entropy variable} $u_i :=U'(c_i)=\log(c_i)$
%  instead of the density variable $c_i$ in the Nernst-Planck equation, where 
%  $U(c_i)=(\log(c_i)-1) c_i$ is the entropy of the $i$-th species. 
%Doing so, positivity of $c_i$ is automatically satisfies as $c_i
%  = \exp(u_i)\ge0$.
%Unconditionally fully discrete energy dissipation is then achieved via 
%a standard space-time finite element framework, where the temporal derivative is
%discretized using the upwinding discontinuous Galerkin method.
We remark that this log-density based formulation 
is similar in spirit to the entropy-stable schemes based on 
the so-called {\it entropy
variables} for hyperbolic 
conservation laws and compressible flow in the CFD literatures
%directly discretizing the entropy variables instead of the
%conserved variables was a well-known technique to achieve entropy stability 
%for conservation laws in the CFD literature 
\cite{Harten83, Tadmor84,Hughes86,Barth99}.
Our temporal discretization is an upwinding discontinuous Galerkin (DG) scheme,
which guarantees unconditional energy stability.
We also discuss adaptive time stepping using the classical proportional integral
(PI) step size controller \cite{Gustafsson88}.

Since the PNP equations can be viewed as a  Wasserstein gradient flow
\cite{Kinderlehrer17},
we expect our STFEM scheme with {\it entropy variables}  can be applied to 
construct high-order positivity preserving and unconditionally energy stable schemes for other
Wasserstein gradient flow problems, like the Fokker-Planck equation and the
porous medium equation.

The rest of the paper is organized as follows.
In Section \ref{sec:method}, we first introduce the PNP equations then 
present the spatial/temporal finite element discretizations and prove that they
are mass conservative, positivity preserving, and unconditionally energy dissipative.
We further discuss about the nonlinear system solver via the Newton's method and
adaptive time step size control.
Numerical results are presented in Section \ref{sec:num}.
We conclude in Section \ref{sec:conclude}.

\section{The PNP equations and the space-time finite element schemes}
\label{sec:method}
\subsection{PNP equations}
We consider the PNP equations with $N$ species of charged particles \cite{Eisenberg98}
on a bounded domain $\Omega\subset \mathbb{R}^d$, $d=1,2,3$, with 
boundary $\partial\Omega$:
\begin{subequations}
  \label{pnp}
  \begin{align}
  \label{pnp1}
    \frac{\partial c_i}{\partial t} = &\; \nabla\cdot \left(
  D_i c_i\nabla\mu_i
  %\left(\nabla\log(c_i)
 %   +\frac{z_i e}{k_B T}\nabla\phi
%  \right)
  \right),\quad i=1,2,\cdots, N,\\
  \label{pnp2}
      -\nabla\cdot(\epsilon\nabla \phi) = &\; \rho_0+\sum_{i=1}^Nz_iec_i,
  \end{align}
where $c_i$ is the density of the $i$-th charged particle species, 
$D_i$ is the diffusion
constant, 
$$
\mu_i := 
 \log(c_i)
   +\frac{z_i e}{k_B T}\phi
$$
is the chemical potential of the $i$-th species, 
$z_i$ is the valence, $e$ is the unit charge, $k_B$ is the Boltzmann
constant, $T$ is the absolute temperature, $\epsilon$ is the electric permittivity, $\phi$
is the electrostatic potential, $\rho_0$ is the permanent (fixed) charge density of the
system, and $N$ is the number of charged particle species.

The PNP system \eqref{pnp1}--\eqref{pnp2} is closed with the following set of initial and
boundary conditions:
\begin{alignat}{2}
  \label{pnp3}
  c_i(0,x) =&\; c_i^0(x),&&\quad \text{ in }\Omega \\
  \label{pnp4}
  c_i\frac{\partial \mu_i}{\partial n} = %&&\quad \text{ on }\partial\Omega \\
 \frac{\partial \phi}{\partial n}   =&\; 0,  &&\quad \text{ on }\partial\Omega.
\end{alignat}
\end{subequations}
Here for simplicity, we use the homogeneous Neumann boundary condition for 
the charged particle densities.
%and electrostatic potential.
%in \eqref{pnp1} 
%and the homogeneous Dirichlet boundary conditions for the
%electrostatic potential in \eqref{pnp2}. 
Other boundary conditions will be used in our numerical experiments.
Note that the electrostatic potential $\phi$ is determined up to a constant due
to the pure Neumann boundary condition. 
Following the classical convention, we select a unique $\phi$ by  requiring 
$$
\int_{\Omega}\phi\,\mathrm{dx}=0.
$$

The PNP system \eqref{pnp} satisfies the following three important properties:
\begin{subequations}
  \label{prop} 
\begin{itemize}
  \item [(i)] Mass conservation: 
    \begin{align}
      \label{mass}
      \int_{\Omega}c_i(t, x)\mathrm{dx} = 
    \int_{\Omega}c_i^0(x)\mathrm{dx}. 
    \end{align}
  \item [(ii)] Positivity preservation: 
\begin{align}
  \label{positivity}
  \text{
  If $c_i^0(x)>0$, then $c_i(t, x)>0$ for any $t>0$.}
\end{align}
  \item [(iii)] Energy dissipation:
    \begin{align}
      \label{ener}
      \frac{d}{dt} E=-\sum_{i=1}^N\int_{\Omega}D_ic_i|\nabla\mu_i|^2\mathrm{dx},
    \end{align}
    where 
    $
    E(\{c_i\}, \phi):=
    \int_{\Omega}\left(
      \sum_{i=1}^N(c_i(\log(c_i)-1))+\frac12\frac{\epsilon}{k_BT}|\nabla\phi|^2
    \right)\;\mathrm{dx}
    $
    is the total free energy.
\end{itemize}
\end{subequations}
%We specifically remark that the above energy dissipation \eqref{ener} can be
%obtained by a standard variational argument where we multiply
%equation \eqref{pnp1} with test function $\mu_i$, integrating over the domain $\Omega$, applying integration by parts, 
%, multiplying
%equation \eqref{pnp2} with test function $\frac{\partial \phi/(k_BT)}{\partial
%t}$, and 
%summing up the resulting terms.

\begin{remark}
The above energy dissipation \eqref{ener} is
obtained by a standard energy variational argument, where, in particular, one 
multiplies the equation \eqref{pnp1} with the test function
$\mu_i=\log(c_i)+\frac{z_i e}{k_BT}\phi$ and integrates over the domain $\Omega$.
An immediate consequence is that such energy dissipation would fail to hold in
a standard finite element discretization where one only approximates the fields $c_i$ and 
$\phi$ using finite elements.
%, because 
%$\mu_i$ would not lie in the finite element function space as it contants 
%the  logarithmic term $\log(c_i)$.
\end{remark}

One approach to recover energy stability
is to use a log-density formulation \cite{Metti16}, where one 
directly discretizes the {\it entropy variables}
\begin{align}
  \label{entropy}
  u_i = U'(c_i) = \log(c_i),
\end{align}
instead of the densities $c_i$, 
  where $U(c_i):= c_i(\log(c_i)-1)$  is the entropy of the $i$-th species.
  In this formulation, 
  the species densities $c_i(u_i):=\exp(u_i)$ are {\it derived}
  variables which are guaranteed to stay positive $c_i=\exp(u_i)>0$ for any
  time $t>0$.
  We point out that such {\it entropy-variable} based schemes 
  are similar in spirit to the
  entropy stable schemes using 
  {\it entropy variables}
  for  hyperbolic conservation laws and compressible flow \cite{Harten83,
  Tadmor84, Hughes86, Barth99}.

  To this end, we work with the following reformulated  PNP equations
  \eqref{pnp1}--\eqref{pnp2}:
  \begin{subequations}
    \label{pnpY}
    \begin{align}
      \label{pnpY1}
    \frac{\partial \exp(u_i)}{\partial t} = &\; \nabla\cdot \left(
      D_i\exp(u_i)\left(\nabla u_i
   +\frac{z_i e}{k_B T}\nabla\phi
 \right)
  \right),\\
      -\nabla\cdot(\epsilon\nabla \phi) = &\; \rho_0+\sum_{i=1}^Nz_ie\exp(u_i).
  \end{align}
  \end{subequations}

\newcommand{\Vh}{V_h^{k}}
\newcommand{\Vho}{V_{h,0}^{k}}

\newcommand{\Vst}{\mathsf{V}_h^{k,m,n}}
\newcommand{\Vstx}{\mathsf{V}_{h}^{k,m-1,n}}
\newcommand{\Vsto}{\mathsf{V}_{h}^{k,m,n}}

\subsection{Spatial discretization}
Let $\Th:=\{K\}$ be a conforming simplicial
triangulation
 of the domain $\Omega$.
 We shall use the following conforming finite element space
 \begin{align}
   \label{space-pk}
   \Vh := \{v\in H^1(\Omega):\; v|_K\in\pol_k(K),\quad \forall K\in\Th\},
 \end{align}
 where $\pol_k(K)$ is the space of polynomials of degree at most $k\ge 1$ on $K$.
% We further denote $\Vho:= \Vh\cap H^1_0(\Omega)$ as the subspace
% of $\Vh$  with vanishing Dirichlet boundary conditions.

 The spatial discretization of our finite element scheme 
for the equations \eqref{pnpY} with initial and boundary conditions 
\eqref{pnp3}--\eqref{pnp4}
reads as follows:
Find $u_{h,1}, \cdots, u_{h,N}\in V_h$ and $\phi_h\in V_{h}$
with $\int_{\Omega}\phi_h\,\mathrm{dx}=0$
such that, for $t>0$, 
\begin{subequations}
  \label{fem}
  \begin{align}
    \label{fem1}
    \int_{\Omega}\frac{\partial \exp(u_{h,i})}{\partial t} v_i\,\mathrm{dx} 
    +\int_{\Omega} 
    D_i\exp(u_{h,i})\left(\nabla u_{h,i}
   +\frac{z_i e}{k_B T}\nabla\phi_h
 \right)\cdot\nabla v_i\,\mathrm{dx}
=&\;0,\quad 
\forall v_i\in V_h,\\
    \label{fem2}
\int_{\Omega}
\epsilon\nabla \phi_h\cdot\nabla\psi\,\mathrm{dx} -\; 
\int_{\Omega}
\left(\rho_0+\sum_{i=1}^Nz_ie\exp(u_{h,i})\right)
\psi\,\mathrm{dx} =&\;0,\quad \forall \psi\in V_{h},
  \end{align}
\end{subequations}
with initial conditions
$$
u_{h,i}(0, x) = \log(c_i^0(x)),\quad i=1,2,\cdots, N.
$$

The following results show that 
the semi-discrete scheme \eqref{fem} satisfies the three properties
\eqref{prop}.

\begin{theorem}
  \label{thm:fem}
  Assume $c_i^0>0$ for all $i$. 
%Let $u_{h,1}, \cdots, u_{h,N}\in V_h$ and $\phi_h\in V_{h,0}$
%be solution to the semi-discrete scheme \eqref{fem}. 
Then the three properties \eqref{prop} are satisfied 
for the solution to the semi-discrete scheme \eqref{fem}, 
where the densities $c_i$ in \eqref{prop} 
are given explicitly as $c_i=\exp(u_{h,i})$.
\end{theorem}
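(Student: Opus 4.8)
The plan is to verify the three properties of \eqref{prop} one at a time for the semi-discrete scheme \eqref{fem}, by making judicious choices of the test functions $v_i$ and $\psi$ and mimicking the continuous energy-variational argument recalled in the Remark. The crucial structural feature I would exploit is that in the log-density formulation the densities are \emph{defined} as $c_i=\exp(u_{h,i})$, so positivity is automatic and requires no argument at all; the real work is in mass conservation and energy dissipation.

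\textbf{Mass conservation.} First I would prove \eqref{mass}. Taking the constant test function $v_i\equiv 1\in V_h$ in \eqref{fem1}, the diffusion term contains $\nabla v_i=0$ and hence vanishes, leaving
\begin{equation*}
\frac{d}{dt}\int_{\Omega}\exp(u_{h,i})\,\mathrm{dx}
=\int_{\Omega}\frac{\partial \exp(u_{h,i})}{\partial t}\,\mathrm{dx}=0.
\end{equation*}
Since $c_i=\exp(u_{h,i})$ and the initial condition gives $\exp(u_{h,i}(0,x))=c_i^0(x)$, integrating in time yields $\int_\Omega c_i(t,x)\,\mathrm{dx}=\int_\Omega c_i^0(x)\,\mathrm{dx}$, which is exactly \eqref{mass}. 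This step is routine and is the natural discrete analogue of integrating \eqref{pnp1} over $\Omega$.

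\textbf{Energy dissipation.} This is the heart of the matter and the step I expect to be the main obstacle. The idea is to differentiate the discrete free energy $E(\{c_i\},\phi_h)$ in time and show it equals the claimed dissipation. The key observation, and the whole point of discretizing the entropy variable rather than the density, is that the test function I want to use, namely the discrete chemical potential $\mu_{h,i}=u_{h,i}+\frac{z_ie}{k_BT}\phi_h$, actually lies in the finite element space $V_h$ (it is a sum of two $V_h$ functions), so it is an \emph{admissible} test function in \eqref{fem1}; this is precisely what fails in a standard $c_i$-based discretization as noted in the Remark. I would compute $\frac{d}{dt}E$ by the chain rule. For the entropy part, since $\frac{d}{dt}U(c_i)=U'(c_i)\frac{\partial c_i}{\partial t}=u_{h,i}\frac{\partial \exp(u_{h,i})}{\partial t}$, differentiating $\int_\Omega\sum_i U(c_i)$ produces exactly the time-derivative term of \eqref{fem1} tested against $u_{h,i}$. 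For the electrostatic part $\frac12\frac{\epsilon}{k_BT}\int_\Omega|\nabla\phi_h|^2$, differentiating gives $\frac{\epsilon}{k_BT}\int_\Omega\nabla\phi_h\cdot\nabla\partial_t\phi_h$; I would rewrite this using the Poisson relation \eqref{fem2} differentiated in time, choosing $\psi=\frac{1}{k_BT}\partial_t\phi_h\in V_h$ (here I must check $\int_\Omega\partial_t\phi_h\,\mathrm{dx}=0$ so the test function is admissible in the constrained space, which follows from $\int_\Omega\phi_h\,\mathrm{dx}=0$ for all $t$), to convert it into $\frac{1}{k_BT}\int_\Omega(\sum_i z_ie\,\partial_t\exp(u_{h,i}))\phi_h$. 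Adding this to the entropy contribution assembles the full chemical potential $\mu_{h,i}=u_{h,i}+\frac{z_ie}{k_BT}\phi_h$ multiplying $\partial_t\exp(u_{h,i})$ under the integral.

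Having identified the total derivative as $\frac{d}{dt}E=\sum_i\int_\Omega \frac{\partial \exp(u_{h,i})}{\partial t}\,\mu_{h,i}\,\mathrm{dx}$, I would now take $v_i=\mu_{h,i}\in V_h$ in \eqref{fem1} to replace the time-derivative term by minus the diffusion term, obtaining
\begin{equation*}
\frac{d}{dt}E=-\sum_{i=1}^N\int_{\Omega}D_i\exp(u_{h,i})\Big(\nabla u_{h,i}+\tfrac{z_ie}{k_BT}\nabla\phi_h\Big)\cdot\nabla\mu_{h,i}\,\mathrm{dx}
=-\sum_{i=1}^N\int_{\Omega}D_i\,c_i\,|\nabla\mu_{h,i}|^2\,\mathrm{dx}\le 0,
\end{equation*}
where in the last equality I use that $\nabla\mu_{h,i}=\nabla u_{h,i}+\frac{z_ie}{k_BT}\nabla\phi_h$ exactly matches the flux factor and $c_i=\exp(u_{h,i})>0$. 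This reproduces \eqref{ener} at the discrete level with a nonpositive right-hand side. The only genuine subtleties to watch are the admissibility of the test functions in the mean-zero constrained space for $\phi_h$, the bookkeeping of the $k_BT$ factors so that the electrostatic cross term matches precisely, and the tacit assumption that the semi-discrete solution is smooth enough in $t$ to justify differentiating $E$ and $\phi_h$; all of these are standard once the structural alignment between the entropy variable and the chemical potential is in place.
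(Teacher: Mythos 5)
Your proof is correct, and its core coincides with the paper's: positivity is automatic from $c_i=\exp(u_{h,i})$, mass conservation follows from $v_i\equiv 1$, and the energy law hinges on the admissibility of the test function $\mu_{h,i}=u_{h,i}+\frac{z_ie}{k_BT}\phi_h\in V_h$ together with the identity $\int_\Omega \partial_t\exp(u_{h,i})\,u_{h,i}\,\mathrm{dx}=\frac{d}{dt}\int_\Omega U(u_{h,i})\,\mathrm{dx}$. Where you genuinely diverge is in handling the electrostatic coupling term. The paper keeps the Poisson equation \eqref{fem2} un-differentiated: it tests it with $\psi=\partial_t\phi_h/(k_BT)$, which after the product rule produces a \emph{modified} energy $E_h$ containing the cross terms $\sum_i\frac{z_ie}{k_BT}\exp(u_{h,i})\phi_h-\frac{\epsilon}{2k_BT}|\nabla\phi_h|^2+\frac{\rho_0}{k_BT}\phi_h$, and then needs a second application of \eqref{fem2} with $\psi=\phi_h/(k_BT)$ to identify $E_h$ with the standard energy. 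You instead differentiate \eqref{fem2} in time and test the resulting identity, which converts $\frac{\epsilon}{k_BT}\int_\Omega\nabla\phi_h\cdot\nabla\partial_t\phi_h\,\mathrm{dx}$ directly into $\frac{1}{k_BT}\int_\Omega\sum_i z_ie\,\partial_t\exp(u_{h,i})\,\phi_h\,\mathrm{dx}$; this assembles $\frac{d}{dt}E=\sum_i\int_\Omega\partial_t\exp(u_{h,i})\,\mu_{h,i}\,\mathrm{dx}$ in one stroke and avoids the modified-energy detour. Both routes rest on the same tacit time-regularity of the semi-discrete solution (the paper also uses $\partial_t\phi_h$ as a test function), so neither buys more generality there; yours is slightly more direct, the paper's avoids differentiating the discrete Poisson equation. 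One small slip in your write-up: to obtain the conversion you state, you must test the \emph{time-differentiated} equation with $\psi=\phi_h/(k_BT)$, not with $\psi=\partial_t\phi_h/(k_BT)$; testing the original \eqref{fem2} with $\partial_t\phi_h/(k_BT)$ yields $\frac{1}{k_BT}\int_\Omega\bigl(\rho_0+\sum_iz_ie\exp(u_{h,i})\bigr)\partial_t\phi_h\,\mathrm{dx}$ instead, which is precisely the paper's route. Also, your worry about mean-zero admissibility is moot: the test space in \eqref{fem2} is all of $V_h$, and the zero-mean constraint applies only to the solution $\phi_h$, for uniqueness.
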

\begin{proof}
  Taking $v_i=1$ in \eqref{fem1}, we get mass conservation property
  \eqref{mass}.
  Positivity property \eqref{positivity} following directly by the definition of
  $c_i=\exp(u_{h,i})>0$.

  Let us prove the energy dissipation property \eqref{ener}.
  Denoting $\mu_{h,i}:=u_{h,i}+\frac{z_i e}{k_B T}\phi_h\in V_h$, 
  taking test function $v_i=\mu_{h,i}$ in \eqref{fem1}
  and adding, we get
  \begin{align*}
 %   \label{ener1}
    \sum_{i=1}^N\int_{\Omega}\frac{\partial \exp(u_{h,i})}{\partial t} 
    \mu_{h,i}
    \,\mathrm{dx} 
    +
    \sum_{i=1}^N
    \int_{\Omega} 
 D_i\exp(u_i)|\nabla\mu_{h,i}|^2 \,\mathrm{dx}
=&\;0.
  \end{align*}
 A simple calculation yields that 
  \begin{align*}
    \int_{\Omega}
    \frac{\partial \exp(u_{h,i})}{\partial t} 
    u_{h,i}
    \,\mathrm{dx} 
    =&\; \frac{d}{dt}
    \int_{\Omega}\underbrace{\exp(u_{h,i})(u_{h,i}-1)}_{:=U(u_{h,i})}
    \,\mathrm{dx}. 
  \end{align*}
  Hence, 
  \begin{align}
    \label{ener1}
    \sum_{i=1}^N
\frac{d}{dt}
    \int_{\Omega}U(u_{h,i})
    \,\mathrm{dx} 
    +
    \int_{\Omega}
    \sum_{i=1}^N
    \frac{z_i e}{k_BT} \frac{\partial\exp(u_{h,i})}{\partial t} 
   \phi_h \,\mathrm{dx} 
    +
    \sum_{i=1}^N
    \int_{\Omega} 
 D_i\exp(u_i)|\nabla\mu_{h,i}|^2 \,\mathrm{dx}
=&\;0.
  \end{align}
  Taking $\psi= \frac{\partial \phi}{\partial t}/({k_BT})$ in equation \eqref{fem2} and subtract the
  resulting expression from \eqref{ener1}, we get
  \begin{align*}
\frac{d}{dt}
%    \int_{\Omega}
E_h
  %  U(u_{h,i})
%    \,\mathrm{dx} 
 %   +
 %   \frac{d}{dt}
 %   \int_{\Omega}
 %   \sum_{i=1}^N
 %   \frac{z_i e}{k_BT} \exp(u_{h,i})
 %  \phi_h \,\mathrm{dx} 
    +
    \sum_{i=1}^N
    \int_{\Omega} 
 D_i\exp(u_i)|\nabla\mu_{h,i}|^2 \,\mathrm{dx}
=&\;0,
  \end{align*}
where the discrete free energy
\begin{align*}
  E_h:= \int_{\Omega}\left(\sum_{i=1}^NU(u_{h,i})+
    \sum_{i=1}^N
    \frac{z_i e}{k_BT} \exp(u_{h,i})
   \phi_h 
   -\frac{\epsilon}{2k_BT}|\nabla\phi_h|^2
 +\frac{\rho_0}{k_BT}\phi_h\right)
  \mathrm{dx}.
\end{align*}
Taking $\psi=\phi_h/(k_BT)$ in \eqref{fem2}, we get 
$$
 \int_{\Omega}
    \sum_{i=1}^N
    \frac{z_i e}{k_BT} \exp(u_{h,i})
   \phi_h 
  \mathrm{dx}
  = 
 \int_{\Omega}
 \left( \frac{\epsilon}{k_BT}|\nabla\phi_h|^2
 -\frac{\rho_0}{k_BT}\phi_h\right)
  \mathrm{dx}.
$$
Combining the above two expressions, we get 
$$
  E_h= \int_{\Omega}\left(\sum_{i=1}^NU(u_{h,i})
   +\frac{\epsilon}{2k_BT}|\nabla\phi_h|^2
 \right)\mathrm{dx}.
$$
This completes the proof of \eqref{ener}.
\end{proof}

\subsection{Temporal discretization}
In this subsection, we discretize
the differential-algebraic system \eqref{fem} using a discontinuous Galerkin
(DG)
time integrator of degree $m\ge 0$. The major advantage of using the DG integrator is that we can
prove an unconditional energy stability result for the fully discrete scheme
for any polynomial degree $m\ge 0$ in time. To the best of our knowledge, this
is the first class of arbitrarily high-order accurate numerical schemes for the
PNP equations that are provably
positive and unconditionally energy stable.

At the $n$-th time level $t^n$, we denote the time step size as $\Delta t^n$ and the
update time interval as 
$I^n=[t^n,t^{n+1})$. We denote the space-time finite element space
on the space-time slab $\Omega\times I^n$ as follows:
\begin{align}
   \label{space-pkm}
   \Vst := \{v\in H^1(\Omega\times I^n):\; v|_{K\times
   I^n}\in\pol_k(K)\otimes \pol_m(I^n),\quad \forall K\in\Th\},
\end{align}
The fully discrete scheme then reads as follows:
for any $n=1,2,\cdots$, find 
$u_{h,i}^n\in \Vst$ and $\phi_h^n\in \Vst$
with 
$\int_{\Omega}\phi_h^{n,-}\,\mathrm{dx}=0$
such that 
\begin{subequations}
  \label{sfem}
  \begin{align}
    \label{sfem1}
   %- 
   \int_{I^n}\int_{\Omega}{
   \frac{\partial \exp(u_{h,i}^n)}{\partial t}}
  v_i\,\mathrm{dx} \mathrm{dt}
% + \int_{\Omega}{\exp(u_{h,i}^{n,-})}
% v_i^{n,-}
%\,\mathrm{dx}
+ \int_{\Omega}{\left(\exp(u_{h,i}^{n,+})-\exp(u_{h,i}^{n-1,-})\right)}
v_i^{n,+}
\,\mathrm{dx}
&
\nonumber\\
+\int_{I^n}\int_{\Omega} 
    D_i\exp(u_{h,i}^n)\left(\nabla u_{h,i}^n
   +\frac{z_i e}{k_B T}\nabla\phi_h^n
 \right)\cdot\nabla v_i\,\mathrm{dx}\mathrm{dt}
&=\;0,\quad 
\forall v_i\in \Vst,\\
    \label{sfem2}
    \int_{I^n}\int_{\Omega}
    \epsilon\nabla \phi_h^n\cdot\nabla\psi\,\mathrm{dx}\mathrm{dt} -\; 
    \int_{I^n}\int_{\Omega}
\left(\rho_0+\sum_{i=1}^Nz_ie\exp(u_{h,i}^n)\right)
\psi\,\mathrm{dx}\mathrm{dt} &=\;0,\quad \forall \psi\in \Vstx,\\
    \label{sfem3}
\int_{\Omega}
\epsilon\nabla \phi_h^{n,-}\cdot\nabla\psi^{n,-}\,\mathrm{dx} -\; 
\int_{\Omega}
\left(\rho_0+\sum_{i=1}^Nz_ie\exp(u_{h,i}^{n,-})\right)
\psi^{n,-}\,\mathrm{dx} &=\;0,\quad \forall \psi\in \Vh,
  \end{align}
\end{subequations}
where we denote $\xi^{n,-}:=\lim_{t\nearrow t^{n+1}}\xi^n(t, x)$
and $\xi^{n,+}:=\lim_{t\searrow t^{n}}\xi^n(t, x)$.
%It is clear that the number of unknowns and equations in \eqref{sfem}
%coincide.
\begin{remark}
We note that the classical upwinding DG time integrator is used 
in \eqref{sfem1}, and a non-standard Gauss-Radau type quadrature 
rule is used
in \eqref{sfem2}--\eqref{sfem3} 
for the temporal discretization of the 
Poisson equation.
%in the time interval $I^n$.
This Gauss-Radau type quadrature rule 
is necessary for us to prove the unconditional energy
stability of the scheme \eqref{sfem}, see the proof of Theorem \ref{thm:sfem} below.
We further note that when $m=0$, the temporal discretization is simply the
backward Euler method.
\end{remark}

The following results show that 
the fully-discrete scheme \eqref{sfem} satisfies a discrete version of 
three properties \eqref{prop}.

\begin{theorem}
  \label{thm:sfem}
  Assume $c_i^0>0$ for all $i$. 
  For any solution to the scheme \eqref{sfem}, 
  the following three properties holds
  \begin{subequations}
  \label{prop-s}  
  \begin{align}
    \label{prop-s1}
  \int_{\Omega}{\exp(u_{h,i}^{n,-})}\,\mathrm{dx}
 = &\;\int_{\Omega}{\exp(u_{h,i}^{n-1,-})}\,\mathrm{dx}, \\
    \label{prop-s2}
 c_{h,i}^n:= \exp(u_{h,i}^n)> &\; 0, \\
    \label{prop-s3}
 E_h^{n}-E_h^{n-1}=&\; 
 - \mathsf{Diss}_h^n -N_{h,1}^n-N_{h,2}^n 
%    \int_{I^n}  \int_{\Omega} 
%    D_ic_{h,i}^n|\nabla\mu_{h,i}^n|^2 \,\mathrm{dxdt},
\end{align}
where the discrete energy is given by  
\begin{align}
  \label{enerD}
  E_h^{n}:=
\int_{\Omega} \left( \sum_{i=1}^NU(u_{h,i}^{n,-})
+
      \frac{\epsilon}{2k_BT}|\nabla
      \phi_h^{n,-}|^2\right)
\,\mathrm{dx},
\end{align}
the (non-negative) physical dissipation term 
\begin{align*}
 \mathsf{Diss}_h^n
:=    \int_{I^n}  \int_{\Omega} 
\sum_{i=1}^ND_ic_{h,i}^n|\nabla\mu_{h,i}^n|^2 \,\mathrm{dxdt},
\end{align*}
and
the (non-negative) numerical dissipation terms $N_{h,1}^n$ and $N_{h,2}^n$ 
for the temporal discretization
are given as follows
%corresponds to the following 
%non-negative numerical dissipation terms
%from the temporal discretization:
\begin{align*}
  N_{1,h}^n :=&\; 
\int_{\Omega}
 \sum_{i=1}^N   
    \frac12\exp(\xi_{h,i}^n)(u_{h,i}^{n-1,-}-u_{h,i}^{n,+})^2
      \,\mathrm{dx},\\
  N_{2,h}^n:=&\;
\int_{\Omega}
\frac{\epsilon}{2k_BT}|\nabla
(\phi_h^{n,+}-\phi^{n-1,-})|^2
    \,\mathrm{dx},
\end{align*}
where $\xi_{h,i}^n$ is a function between 
$u_{h,i}^{n,+}$ and 
$u_{h,i}^{n-1,-}$ for each $i$.
\end{subequations}
\end{theorem}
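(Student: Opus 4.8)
The plan is to replicate, now slab-by-slab on $\Omega\times I^n$, the continuous energy computation from the proof of Theorem \ref{thm:fem}, with the two numerical dissipation terms emerging respectively from the upwind jump in time and from the Gauss-Radau endpoint treatment of the Poisson coupling. The two easy properties come first. For mass conservation \eqref{prop-s1}, I would take the admissible test function $v_i=1\in\Vst$ in \eqref{sfem1}; the flux term drops since $\nabla v_i=0$, the volume time-derivative term telescopes by the fundamental theorem of calculus to $\int_\Omega(\exp(u_{h,i}^{n,-})-\exp(u_{h,i}^{n,+}))\,\mathrm{dx}$, and adding the upwind jump term cancels the $u_{h,i}^{n,+}$ contribution, leaving exactly \eqref{prop-s1}. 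Positivity \eqref{prop-s2} is immediate from $c_{h,i}^n=\exp(u_{h,i}^n)>0$.

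The heart of the proof is \eqref{prop-s3}. I would test \eqref{sfem1} with $v_i=\mu_{h,i}^n:=u_{h,i}^n+\frac{z_ie}{k_BT}\phi_h^n\in\Vst$ and sum over $i$; the flux term then produces precisely $\mathsf{Diss}_h^n$. It remains to show that the volume time-derivative term plus the upwind jump term reproduce $E_h^n-E_h^{n-1}+N_{1,h}^n+N_{2,h}^n$, which I would do by splitting $\mu_{h,i}^n$ into its $u$-part and its $\phi$-part. For the $u$-part, the identity $\frac{d}{dt}U(u)=\partial_t\exp(u)\,u$ from the proof of Theorem \ref{thm:fem} collapses the volume integral to $\int_\Omega(U(u_{h,i}^{n,-})-U(u_{h,i}^{n,+}))\,\mathrm{dx}$. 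Combining this with the jump term and invoking the second-order Taylor expansion of $\exp$ with Lagrange remainder,
\[
\exp(a)-\exp(b)-\exp(b)(a-b)=\tfrac12\exp(\xi)(a-b)^2,
\]
applied with $a=u_{h,i}^{n,+}$ and $b=u_{h,i}^{n-1,-}$, reassembles $\int_\Omega(U(u_{h,i}^{n,-})-U(u_{h,i}^{n-1,-}))\,\mathrm{dx}$ and leaves exactly the non-negative remainder $N_{1,h}^n$; this is the mechanism by which upwinding produces entropy dissipation.

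The $\phi$-part is the main obstacle, and it is where the Gauss-Radau structure of \eqref{sfem2}--\eqref{sfem3} is indispensable. Writing $q_h^n:=\rho_0+\sum_i z_ie\exp(u_{h,i}^n)$, the $\phi$-contribution equals $\frac{1}{k_BT}$ times $\int_{I^n}\int_\Omega\partial_tq_h^n\,\phi_h^n\,\mathrm{dxdt}+\int_\Omega(q_h^{n,+}-q_h^{n-1,-})\phi_h^{n,+}\,\mathrm{dx}$. I would integrate the first term by parts in time; its boundary terms combine with the jump term into $\int_\Omega(q_h^{n,-}\phi_h^{n,-}-q_h^{n-1,-}\phi_h^{n,+})\,\mathrm{dx}$, while the leftover interior integral $-\int_{I^n}\int_\Omega q_h^n\,\partial_t\phi_h^n\,\mathrm{dxdt}$ is handled by the crucial observation that $\partial_t\phi_h^n\in\Vstx$ (degree $m-1$ in time), hence is an admissible test function in \eqref{sfem2}; this converts it into $\tfrac12\int_\Omega\epsilon(|\nabla\phi_h^{n,-}|^2-|\nabla\phi_h^{n,+}|^2)\,\mathrm{dx}$. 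The endpoint Poisson equation \eqref{sfem3} at level $n$ (tested with $\psi=\phi_h^{n,-}$) and at level $n-1$ (tested with $\psi=\phi_h^{n,+}\in\Vh$) then replaces the two remaining $q\phi$ integrals by $\int_\Omega\epsilon|\nabla\phi_h^{n,-}|^2\,\mathrm{dx}$ and $\int_\Omega\epsilon\nabla\phi_h^{n-1,-}\cdot\nabla\phi_h^{n,+}\,\mathrm{dx}$.

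Completing the square via
\[
\tfrac12|\nabla\phi_h^{n,+}|^2-\nabla\phi_h^{n-1,-}\cdot\nabla\phi_h^{n,+}=\tfrac12|\nabla(\phi_h^{n,+}-\phi_h^{n-1,-})|^2-\tfrac12|\nabla\phi_h^{n-1,-}|^2
\]
turns the $\phi$-part into the electrostatic energy difference $\int_\Omega\frac{\epsilon}{2k_BT}(|\nabla\phi_h^{n,-}|^2-|\nabla\phi_h^{n-1,-}|^2)\,\mathrm{dx}$ plus exactly $N_{2,h}^n$. Adding the $u$- and $\phi$-parts and recalling the definition \eqref{enerD} of $E_h^n$ yields \eqref{prop-s3}. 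The delicate point throughout is that \emph{both} the reduced test space in \eqref{sfem2} (so that $\partial_t\phi_h^n$ is admissible) and the separate endpoint collocation \eqref{sfem3} at levels $n$ and $n-1$ are needed simultaneously; dropping either one destroys the telescoping into $E_h^n-E_h^{n-1}$ and the non-negativity of the numerical dissipation.
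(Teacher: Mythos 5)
Your proof follows essentially the same route as the paper's: testing \eqref{sfem1} with $v_i=\mu_{h,i}^n$, converting the upwind jump via the Lagrange-remainder identity $(\exp(a)-\exp(b))a = U(a)-U(b)+\tfrac12\exp(\xi)(a-b)^2$ into $U(u_{h,i}^{n,+})-U(u_{h,i}^{n-1,-})$ plus $N_{1,h}^n$, testing \eqref{sfem2} with $\partial_t\phi_h^n/(k_BT)\in\Vstx$, and invoking \eqref{sfem3} at levels $n$ and $n-1$ with test functions $\phi_h^{n,-}$ and $\phi_h^{n,+}$ followed by completing the square --- this is exactly the paper's argument (the paper phrases the time integration by parts as a product rule on $S_h^n\phi_h^n$, which is the same computation). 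The one flaw is a sign slip in an intermediate formula: $-\int_{I^n}\int_\Omega q_h^n\,\partial_t\phi_h^n\,\mathrm{dx}\,\mathrm{dt}$ equals $\tfrac12\int_\Omega\epsilon\left(|\nabla\phi_h^{n,+}|^2-|\nabla\phi_h^{n,-}|^2\right)\mathrm{dx}$, not the reverse; since your completing-the-square identity and the final telescoping into $E_h^n-E_h^{n-1}$ plus $N_{2,h}^n$ require (and implicitly use) the correct sign, this is a transcription error rather than a genuine gap.
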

\begin{proof}
  \begin{subequations}
    \label{enerX}
  Again, we only need to prove the energy stability result \eqref{prop-s3} as
  the other two properties are trivially satisfied.

We follow the same proof as the semi-discrete case in Theorem \ref{thm:fem}.
  Denoting 
  $c_{h,i}^n:=\exp(u_{h,i}^n)$, 
  $\mu_{h,i}^n:=u_{h,i}^n+\frac{z_i e}{k_B T}\phi_h^n\in \Vst$, 
  and taking test function $v_i=\mu_{h,i}^n$ in \eqref{sfem1}, we get
  \begin{align}
    \label{enerX0}
\int_{I^n}\int_{\Omega}\frac{\partial c_{h,i}^n}{\partial t} 
    \mu_{h,i}^n
    \,\mathrm{dxdt}
+ \int_{\Omega}{\left(c_{h,i}^{n,+}-c_{h,i}^{n-1,-}\right)}
\mu_{h,i}^{n,+}
\,\mathrm{dx}
    +
    \int_{I^n}  \int_{\Omega} 
    D_ic_{h,i}^n|\nabla\mu_{h,i}^n|^2 \,\mathrm{dxdt}
=&\;0.
  \end{align}
A simple calculation yields that  
  \begin{align*}
    \int_{I^n}
    \int_{\Omega}
    \frac{\partial c_{h,i}^n}{\partial t} 
    u_{h,i}
    \,\mathrm{dxdt} 
    =&\; \int_{I^n}\frac{d}{dt}
    \int_{\Omega}U(u_{h,i})\,\mathrm{dxdt}
    =\; 
    \int_{\Omega}U(u_{h,i}^{n,-})\,\mathrm{dx}
    -\int_{\Omega}U(u_{h,i}^{n,+})\,\mathrm{dx},
  \end{align*}
  where $U(\eta):= \exp(\eta)(\eta-1)$.
  By Taylor expansion we have
  $$
  (\exp(a)-\exp(b))a = U(a)- U(b)+\frac{1}{2}\exp(\xi)(a-b)^2
 % \ge U(a)- U(b), 
  $$
  for some $\xi$ between $a$ and $b$.
Hence, 
  \begin{align*}
  \int_{\Omega}{\left(c_{h,i}^{n,+}-c_{h,i}^{n-1,-}\right)}u_{h,i}^{n,+}\mathrm{dx}
  =
    \int_{\Omega}
    \left(U(u_{h,i}^{n,+})
    -U(u_{h,i}^{n-1,-})
    +\frac12\exp(\xi_{h,i}^n)(u_{h,i}^{n-1,-}-u_{h,i}^{n,+})^2
    \right)
      \,\mathrm{dx}
\end{align*}
for some $\xi_{h,i}^n$ between $u_{h,i}^{n-1,-}$ and $u_{h,i}^{n,+}$.
Combining the above two equalities
and summing the terms \eqref{enerX0} over all the indices $i$, 
we get
\begin{align}
  \label{enerX2}
  \sum_{i=1}^N
  \left( \int_{\Omega}U(u_{h,i}^{n,-})\,\mathrm{dx}
  -\int_{\Omega}U(u_{h,i}^{n-1,-})\,\mathrm{dx}\right)
 +N_{1,h}^n  +I_{c,\phi}
%+
%\int_{I^n}\int_{\Omega}\frac{\partial c_{h,i}^n}{\partial t} 
%    \mu_{h,i}^n
%    \,\mathrm{dxdt}
%+ \int_{\Omega}{\left(c_{h,i}^{n,+}-c_{h,i}^{n-1,-}\right)}
%\mu_{h,i}^{n,+}
%\,\mathrm{dx}
    +
   \underbrace{ \int_{I^n}  \int_{\Omega} 
   \sum_{i=1}^ND_ic_{h,i}^n|\nabla\mu_{h,i}^n|^2
 \,\mathrm{dxdt}}_{=\mathsf{Diss}_h^n}
    = 0,
\end{align}
where 
$N_{1,h}^n$ is the following numerical dissipation term from upwinding
$$
N_{1,h}^n := 
\int_{\Omega}
 \sum_{i=1}^N   
    \frac12\exp(\xi_{h,i}^n)(u_{h,i}^{n-1,-}-u_{h,i}^{n,+})^2
      \,\mathrm{dx}\ge 0,
$$
and
the term $I_{c,\phi}$ is given as follows:
\begin{align*}
  I_{c,\phi}
 :=
\int_{I^n}\int_{\Omega}\frac{\partial
S_{h}^n}{\partial t} 
    \phi_h^n
    \,\mathrm{dxdt}
+ \int_{\Omega}{\left(S_{h}^{n,+}-S_{h}^{n-1,-}\right)}
\phi_{h,i}^{n,+}
\,\mathrm{dx},
\end{align*}
with 
$
S_h^{n}:=\sum_{i=1}^N\frac{z_i e }{k_BT}c_{h,i}^n+\frac{\rho_0}{k_BT}
$ being the source term in \eqref{sfem2}, 
where we used the fact that $\rho_0, z_i, k_B, T$ are independent of time.
Next, taking $\psi=\partial_t\phi_{h}^n/(k_BT)\in \Vstx$ in equation \eqref{sfem2}
and combining with the above term, we get
\begin{align*}
  I_{c,\phi}=&\;-
    \int_{I^n}\int_{\Omega}
    \frac{\partial }{\partial t} (\frac{\epsilon}{2k_BT}|\nabla
    \phi_h^n|^2)\,\mathrm{dx}\mathrm{dt} +
    \int_{I^n}\int_{\Omega}
    \frac{\partial}{\partial t}(S_h^n
    \phi_h^n)\,\mathrm{dx}\mathrm{dt} 
+ \int_{\Omega}{\left(S_{h}^{n,+}-S_{h}^{n-1,-}\right)}
\phi_{h,i}^{n,+}
\,\mathrm{dx},\\
=&\;
    \int_{\Omega}
    \left(
      -\frac{\epsilon}{2k_BT}|\nabla
    \phi_h^{n,-}|^2
    +
\frac{\epsilon}{2k_BT}|\nabla
    \phi_h^{n,+}|^2
    +S_h^{n,-}\phi_h^{n,-}
    -
    S_h^{n-1,-}\phi_h^{n,+}
  \right)
    \,\mathrm{dx},\\
=&\;
    \int_{\Omega}
    \left(
    S_h^{n,-}\phi_h^{n,-}
      -\frac{\epsilon}{2k_BT}|\nabla
    \phi_h^{n,-}|^2
    +
\frac{\epsilon}{2k_BT}|\nabla
(\phi_h^{n,+}-\phi^{n-1,-})|^2
-
\frac{\epsilon}{2k_BT}|\nabla
    \phi_h^{n-1,-}|^2\right)\,\mathrm{dx}\\
 &\;
 +\underbrace{\int_{\Omega}\left(
   \frac{\epsilon}{2k_BT}\nabla\phi_h^{n-1,-}-
S_h^{n-1,-})\phi_h^{n,+}
  \right)
\,\mathrm{dx}}_{=0 \text{ by \eqref{sfem3}.}},\\
=&\;
    \int_{\Omega}
    \left(
      \frac{\epsilon}{2k_BT}|\nabla
    \phi_h^{n,-}|^2
    +
\frac{\epsilon}{2k_BT}|\nabla
(\phi_h^{n,+}-\phi^{n-1,-})|^2
-
\frac{\epsilon}{2k_BT}|\nabla
    \phi_h^{n-1,-}|^2\right)\,\mathrm{dx},
\end{align*}
where the last step is again due to \eqref{sfem3} by taking 
the test function 
$\psi=\phi_{h}^{n,-}/(k_BT)$.
Combining the above expression with the equality \eqref{enerX2}, 
we finally get 
\begin{align*}
  E_h^n
+N_{1,h}^n  +N_{2,h}^n
+\mathsf{Diss}_h^n
= E_h^{n-1},
\end{align*}
%\begin{align*}
%&  \int_{\Omega} \left( \sum_{i=1}^NU(u_{h,i}^{n,-})\,\mathrm{dx}
%+
%      \frac{\epsilon}{2k_BT}|\nabla
%      \phi_h^{n,-}|^2\right)
%\,\mathrm{dx}
%+N_{1,h}^n  +N_{2,h}^n
%+\mathsf{Diss}_h^n
%%    \int_{I^n}  \int_{\Omega} 
%%    D_ic_{h,i}^n|\nabla\mu_{h,i}^n|^2 \,\mathrm{dxdt}
%    \\ 
%&\quad\quad\quad\quad=\;  \int_{\Omega} \left( \sum_{i=1}^NU(u_{h,i}^{n-1,-})\,\mathrm{dx}
%+
%      \frac{\epsilon}{2k_BT}|\nabla
%      \phi_h^{n-1,-}|^2\right)
%\,\mathrm{dx},
%\end{align*}
where we used the definition of the energy $E_h^n$
and the numerical dissipation term $N_{2,h}^n$ from Theorem \ref{thm:sfem}.
%where recall that $N_{2,h}^n$ is the following numerical dissipation term 
%$$
%N_{2,h}^n= 
%    \int_{\Omega}
%\frac{\epsilon}{2k_BT}|\nabla
%(\phi_h^{n,+}-\phi^{n-1,-})|^2
%    \,\mathrm{dx}.
%$$
This completes the proof.
%of the identity \eqref{prop-s3}.
  \end{subequations}
\end{proof}

\begin{remark}
  \label{rk:3}
 Our proposed scheme \eqref{sfem} can be also applied to the variable 
 coefficient case to achieve all the three properties in Theorem \ref{thm:sfem}.
 For example, by multiplying all the spatial integrals in \eqref{thm:sfem}
 with a %variable (possibly discontinuous) cross-section 
 coefficient $A(x)>0$, we 
 immediately get a positive and unconditionally energy stable scheme for 
 the following PNP system with a variable (possibly discontinuous) %cross-section 
$A(x)$:
\begin{subequations}
  \label{pnpX}
  \begin{align}
  \label{pnp1X}
   A \frac{\partial c_i}{\partial t} = &\; \nabla\cdot \left(A
  D_i c_i\nabla\mu_i
  \right),\quad i=1,2,\cdots, N,\\
  \label{pnp2X}
     -\nabla\cdot(A\epsilon\nabla \phi) = &\; A(\rho_0+\sum_{i=1}^Nz_iec_i).
  \end{align}
  In this case, all the integrals in \eqref{prop-s} shall be weighted by the 
  cross-section $A(x)$.
\end{subequations}
\end{remark}

\subsection{Nonlinear system solver and adaptive time stepping}
We use Newton's method to solve the fully discrete 
nonlinear system \eqref{sfem}. A sparse direct solver is used for the 
linear algebraic systems in each Newton's iteration.
The Newton's iteration is stopped when the relative error of the energy 
\eqref{enerD} is reduced by a factor of $10^{-8}$.
%:
%$$
%\frac{|E_h^{n,k}-E_h^{n,k-1}|}{|E_h^{n,0}|}\le 10^{-8}.
%$$

%In many situations, the energy and solution of
%gradient flows can vary drastically in a certain time interval, but only slightly else-
%where. 
One main advantage of unconditionally energy stable schemes with arbitrary order is that they can
be easily implemented with an adaptive time stepping strategy so that the time step
is dictated only by accuracy rather than by stability as with conditionally stable
schemes.
Here we use the classical PI step size control algorithm
\cite{Gustafsson88}:
given a previous time step size $\Delta t^{n-1}$, 
the next time step  size $\Delta t^{n}$ is proposed as
\begin{align}
  \label{stepsize}
  \Delta t_{temp}=&\;
  \left(\frac{tol}{e_{n}}\right)^{K_I}\left(\frac{e_{n-1}}{e_n}\right)^{K_P}\Delta
  t^{n-1},\quad\quad
  \Delta t^{n}=\min\{\Delta t_{temp}, \theta_{\max}\Delta t^{n-1}, \Delta
  t_{\max}\},
\end{align}
where $tol$ is a prescribed error tolerance, 
$\Delta t_{\max}$ is a user defined maximum allowed time step size,
and
\begin{align}
  \label{error-est}
  e^n := \left|\frac{E_h^{n}-E_h^{n,lo}}{E_h^{n}}\right|
\end{align}
is the error estimator based on the relative error in energy between
the scheme \eqref{sfem} (with temporal order $m\ge 1$) and a companion (temporal) low-order scheme
(with $m=0$) with the same spatial discretization, and 
we use the following parameters suggested in \cite{Gustafsson88}:
\begin{align*}
  K_P=0.13,\quad K_I=1/15,\quad \theta_{\max}=2, \quad \rho=1.2.
\end{align*}
We reject the proposed $\Delta t^n$ in \eqref{stepsize} if either the Newton iteration did not
converge or the target tolerance is violated ($e^n> \rho\, tol$).
In this case, we halve the time step size by setting $\Delta t^{n}:=0.5\Delta
t^{n-1}$ and redo the computation.

\section{Numerics}
\label{sec:num}
In this section, 
we present several numerical examples to validate our theoretical results in the
previous section.
%We consider the four examples used in the reference \cite{ShenXu20}
%and an example with discontinuous coefficients.
Our numerical simulations are performed using the open-source finite-element software 
{\sf NGSolve} \cite{Schoberl16}, \url{https://ngsolve.org/}.
In particular, {\sf NGSolve}'s add-on library {\sf ngxfem},
\url{https://github.com/ngsxfem/ngsxfem},
is used in the implementation of the space-time finite element scheme
\eqref{sfem}.

\subsection*{Example 1}(Accuracy test)
We first use a manufactured solution example to test the accuracy of the scheme 
\eqref{sfem}.
We consider the PNP equations \eqref{pnp} with 
$N=2$, $z_1=1$, $z_2=-1$, $e=k_B=T=D_1=D_2=\epsilon=1$.
The compuational domain is a unit square, and we take soure terms such that 
the smooth exact solution is
\begin{align}
  c_1(t, x, y) = &\; 1+0.5 \sin(t)\sin(\pi x)\sin(\pi y),\nonumber\\
  \label{ref-soln}
  c_2(t, x, y) = &\; 1-0.5 \sin(t)\sin(\pi x)\sin(\pi y),\\
  \phi(t, x, y) =&\; \sin(t)\sin(\pi x)\sin(\pi y).\nonumber
\end{align}
%where $\alpha = 0.999999999$. 
%We have $\min c_1=\min c_2 = 1-\alpha\sin(t)$ is a very close to zero.
We use homogeneous Dirichelt boundary conditions on $u_1, u_2$ and $\phi$.

We apply the space-time finite element scheme \eqref{sfem} with $k=m=1$, $k=m=2$, and 
$k=m=3$ on a sequence of uniformly refined trianglar meshes.
We take uniform time step size $\Delta t = 2h$, where $h$ is the mesh size.
The Newton's method converges within $3$--$4$ iterations for all the cases.
The $L^2$-errors at time $t=1$ are recorded in Table~\ref{table:1X}. 
Clearly, we observe the expected ($k+1$)-th order of
convergence for all the variables 
for the scheme \eqref{sfem} using polynomials of degree $k=m$.
\begin{table}[ht!]
\begin{center}
  \scalebox{1}
  {
  \begin{tabular}{ c|c | c  c|c c | c c} %\hline
  $k=m$ & $1/h$ & $L^2$-err in $u_h^1$ & rate &
    $L^2$-err in $u_h^2$ & rate &
    $L^2$-err in $\phi_h^2$ & rate\\ 
    \hline
&  8 & 5.228e-03 & -- & 1.362e-02 & -- & 2.140e-02 & --\\
1& 16 & 1.473e-03 & 1.828 & 3.821e-03 & 1.834 & 5.461e-03 & 1.970\\
& 32 & 3.923e-04 & 1.908 & 1.000e-03 & 1.934 & 1.374e-03 & 1.991\\
& 64 & 1.016e-04 & 1.950 & 2.552e-04 & 1.970 & 3.442e-04 & 1.997\\
\hline
&  8 & 1.218e-04 & -- & 2.615e-04 & -- & 1.999e-04 & --\\
2& 16 & 1.407e-05 & 3.113 & 1.928e-05 & 3.762 & 1.762e-05 & 3.504\\
& 32 & 1.702e-06 & 3.048 & 1.782e-06 & 3.435 & 1.869e-06 & 3.237\\
& 64 & 2.106e-07 & 3.014 & 1.992e-07 & 3.162 & 2.215e-07 & 3.077\\
\hline
&  8 & 9.026e-06 & -- & 1.980e-05 & -- & 1.809e-05 & --\\
3& 16 & 4.813e-07 & 4.229 & 1.115e-06 & 4.151 & 1.083e-06 & 4.062\\
& 32 & 2.768e-08 & 4.120 & 6.665e-08 & 4.064 & 6.651e-08 & 4.025\\
& 64 & 1.675e-09 & 4.047 & 3.990e-09 & 4.062 & 4.122e-09 & 4.012\\
\hline
% &8 & 9.570e-03 & -- & 6.438e-02 & -- & 2.176e-02 & --\\
%1& 16 & 2.682e-03 & 1.835 & 1.850e-02 & 1.799 & 5.561e-03 & 1.968\\
% &32 & 7.065e-04 & 1.924 & 4.805e-03 & 1.945 & 1.400e-03 & 1.990\\
% &64 & 1.813e-04 & 1.963 & 1.207e-03 & 1.993 & 3.507e-04 & 1.997\\ 
%    \hline
% &8 & 3.390e-04 & -- & 2.463e-03 & -- & 2.053e-04 & --\\
%2& 16 & 4.141e-05 & 3.033 & 2.287e-04 & 3.429 & 1.785e-05 & 3.524\\
% & 32 & 5.108e-06 & 3.019 & 2.353e-05 & 3.281 & 1.876e-06 & 3.250\\
% & 64 & 6.359e-07 & 3.006 & 2.687e-06 & 3.131 & 2.217e-07 & 3.081\\
  \end{tabular}
}
\end{center}
\caption{
  \textbf{Example 1.}
  $L^2$-errors at time $t=1$ 
  for the methods \eqref{sfem} with $k=m$ on a sequence of uniformly refined meshes. 
  Time step size $\Delta t = 2h$.
%  Initial and final energy, and total number of time steps 
%  to reach the steady state for the scheme \eqref{sfem} with $k=m=1$
%  and adaptive time stepping \eqref{stepsize} on different meshes.
}
\label{table:1X}
\end{table}

%The equations are solved on a periodic square domain $\Omega=[0, 2\pi]\times [0, 2\pi]$ 
%with  initial values 
%\[
%  u_1(0,x) = \log(1.1+\sin x \sin y), 
%  \quad u_1(0,x) = \log(1.1-\sin x \sin y).
%\] 
%We solve the problem on a sequence of uniform triangular meshes with the coarse mesh
%consists of $8\times 8\times 2$ triangles, and take the 
%same polynomial degree in space and time, i.e. $k=m$.
%We measure the $L^2$-error of the solution at final time $t=1$, with reference
%solution obtained by a high order method ($k=m=5$).

\subsection*{Example 2}(One-dimensional problem with discontinuous coefficients)
Here we solve a two-component ($N=2$) one-dimensional system \eqref{pnpX} with variable
discontinuous coefficients.
The domain is $\Omega=[-28, 25]$, and we use the homogeneous Dirichlet boundary conditions 
$$
\phi(x)=u_1(x)=u_2(x) = 0 \quad \forall x\in \partial \Omega
$$
and initial condition 
$$
u_1(0, x)=u_2(0, x) = 0.
$$
We use the following set of parameters:
\begin{align*}
  k_B=&\;T=e=1, z_1=1, z_2 = -1,
  D_1 = 1, D_2=1.0383,\\
  A = \pi r^2, \text{ with } 
  r(x)=&\;\left\{
    \begin{tabular}{ll}
      $-0.5x-7$ &if $-28<x<-18$,\\[0.5ex]
      $2$ &if $-18<x<-5$,\\[0.5ex]
      $0.5$ &if $-5<x<10$,\\[0.5ex]
      $0.9x-8.5$ &if $10<x<25$,
\end{tabular}
  \right.\\
      \epsilon(x)=&\;\left\{
    \begin{tabular}{ll}
      $4.7448$ &if $-5<x<-10$,\\[0.5ex]
      $189.79$ &elsewhere,
\end{tabular}
\right.\\
      \rho_0(x)=&\;\left\{
    \begin{tabular}{ll}
      $-300$ &if $x\in(-2,-1)\cap (0,1)\cup (2,3)\cup(4,5)\cup (6,7)$,\\[0.5ex]
      $0$ &elsewhere.
\end{tabular}
\right.
\end{align*}
We apply the second-order scheme \eqref{sfem} with polynomial degree $k=m=1$, 
and take the initial time stepsize as 
$\Delta t^1 = 10^{-4}$. 
The adaptive time stepping algorithm \eqref{stepsize} is used where
the companion  low order scheme takes $k=1$ and $m=0$ on the same mesh.
For the two user defined paramters in \eqref{stepsize}, we take 
$tol=10^{-3}$, and $\Delta t_{\max}= \left\{
  \begin{tabular}{ll}
    $2$ & if  $t<250$,\\[1ex]
    $200$ & if  $t>250$.
\end{tabular}
\right.
$
%The problem eventually reached a steady state. 
The simulation is terminated when relative error in the energy
in two consecutive time steps is less than $10^{-13}$,  
$$
\left|\frac{E_h^n-E_h^{n-1}}{E_h^{n}}\right|< 10^{-13},
$$
which indicates a steady state is reached. 
We find the steady state is reached around time $t=1400$.

This problem is very challenging to solve as $c_2 = \exp(u_2)$ stays near {\it
zero} for  $x\in (-5, 10)$ for an extended period of time. Typical solutions at
different times are shown in Figure~\ref{fig:1}--\ref{fig:3}, which is obtained
by the scheme \eqref{sfem} on a very fine uniform mesh with mesh size $h=1/128$.
It is clear from Figure~\ref{fig:3} that $c_2 = \exp(u_2)$
stays positive and below $\exp(-50)\approx 2\times 10^{-22}$ for $x\in (-5, 10)$
and $t\in (20, 240)$, which can be as small as 
$\exp(-150)\approx 7\times 10^{-66}$ at around time $t=100$.
A non positivity preserving scheme may easily lead to a negative
density $c_2$, hence an early termination of the code using such scheme due to the need to evaluate $\log(c_2)$.
%Moreover, from Figure~\ref{fig:2}--\ref{fig:3}, we clearly observe a multiscale
%behavior in time.
\begin{figure}[h!]
\centering
\includegraphics[width=0.9\textwidth]{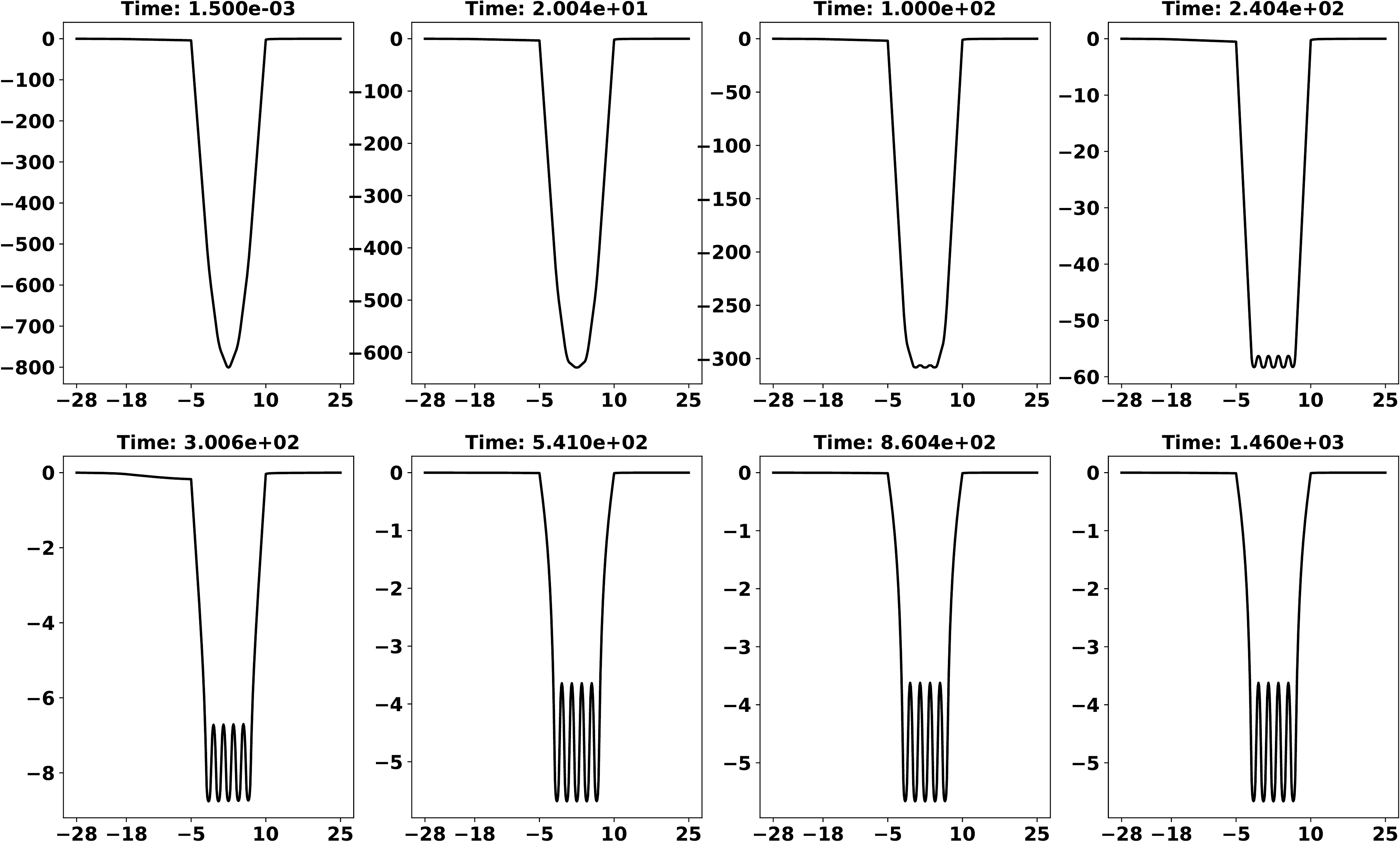}
\caption{
  \textbf{Example 2.}
  Electrostatic potential $\phi$ at different times obtained with 
  the scheme \eqref{sfem} with $k=m=1$ on a uniform mesh with mesh size $h = 1/128$.}
\label{fig:1}
\end{figure} 

\begin{figure}[h!]
\centering
\includegraphics[width=0.9\textwidth]{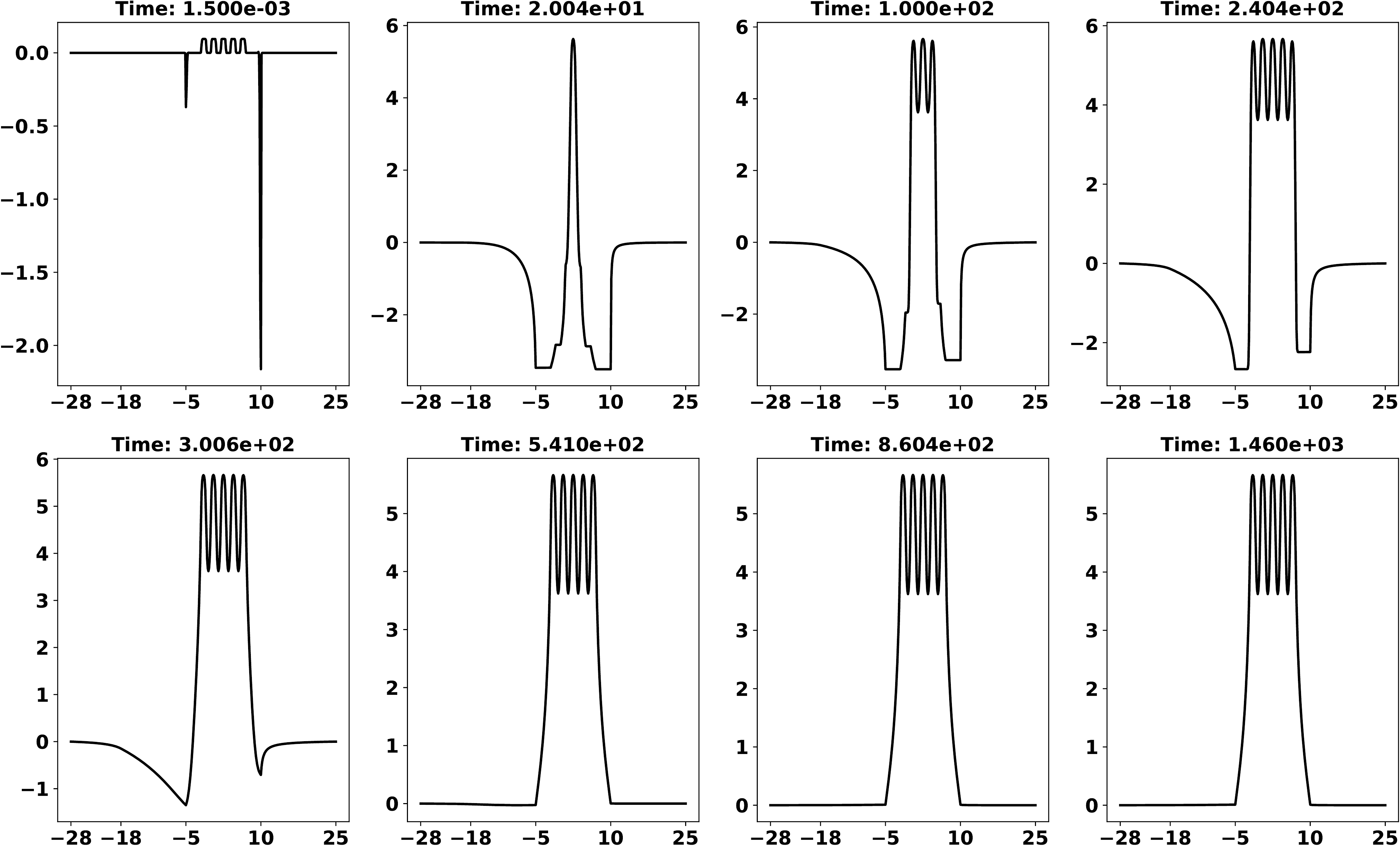}
\caption{
  \textbf{Example 2.}
  Logarithmic density $u_1 = \log(c_1)$ at different times obtained with 
  the scheme \eqref{sfem} with $k=m=1$ on a uniform mesh with mesh size $h = 1/128$.}
\label{fig:2}
\end{figure} 

\begin{figure}[h!]
\centering
\includegraphics[width=0.9\textwidth]{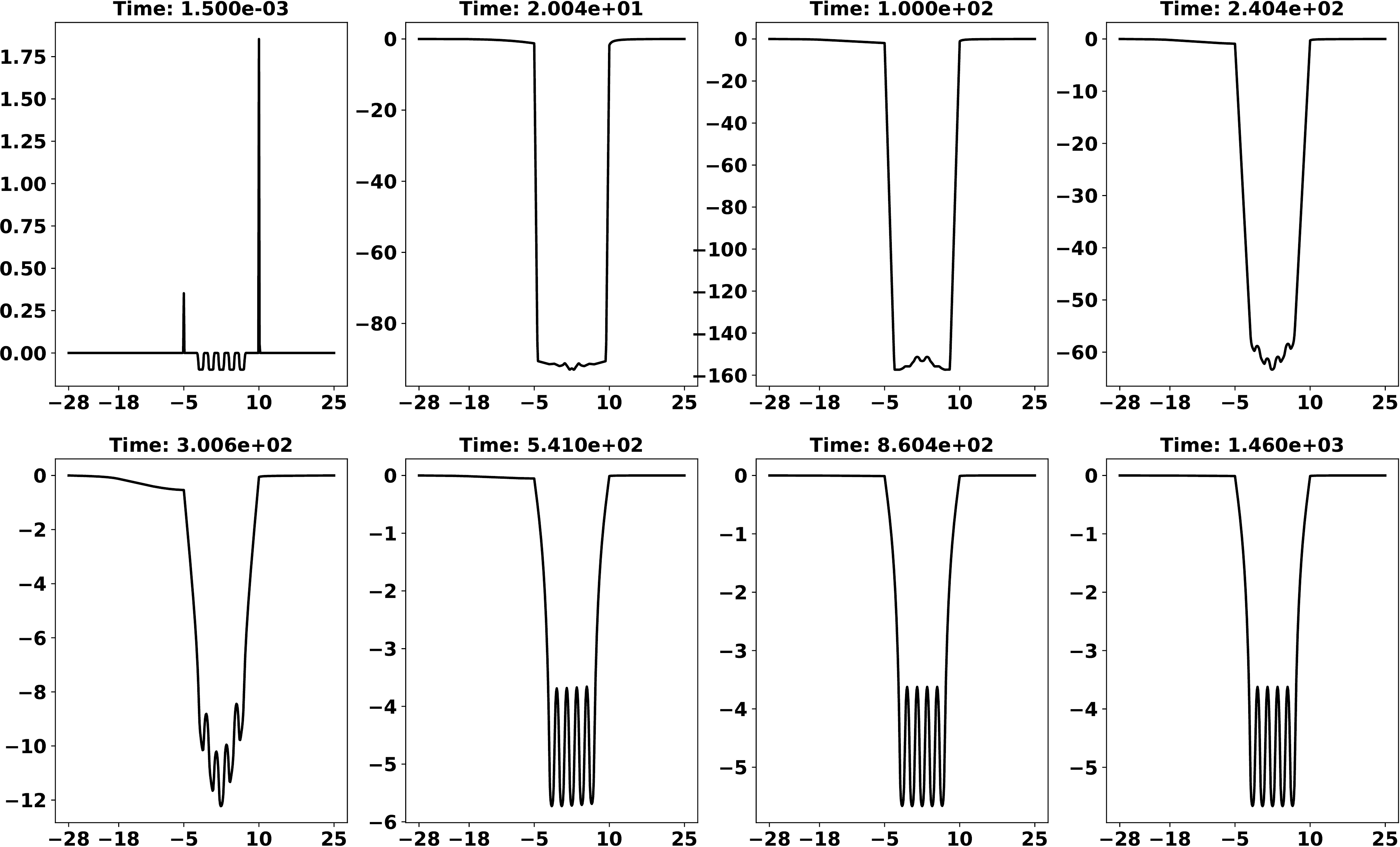}
\caption{
  \textbf{Example 2.}
  Logarithmic density $u_2 = \log(c_2)$ at different times obtained with 
  the scheme \eqref{sfem} with $k=m=1$ on a uniform mesh with mesh size $h = 1/128$.}
\label{fig:3}
\end{figure}

The initial and final energy, along with the total number of 
time steps to reach the steady state is recorded in Table~\ref{table:1}
for four consecutively refined (uniform) meshes. 
We clearly observe a convergence in the energies as the mesh is refined.
Moreover, the total number of time steps to reach the steady state is similar on all four meshes.
\begin{table}[ht!]
\begin{center}
  \scalebox{1}
  {
  \begin{tabular}{ c | c | c|c|c} %\hline
    mesh size $h$ & $1/16$ & $1/32$ & $1/64$ & $1/128$\\
    \hline
    initial    energy & 
    387788.75 &
    387798.52&
    387800.97&
    387801.58\\
    \hline
    final    energy & 
    -3023.3435&%36686788  &
    -3022.3990&%343597466146  &
    -3022.1619&%343501736708  &
    -3022.1025%3435328945016
    \\
    \hline
    total time steps &
    206 & 208 & 210 & 210
  \end{tabular}
}
\end{center}
\caption{
  \textbf{Example 2.}
  Initial and final energy, and total number of time steps 
  to reach the steady state for the scheme \eqref{sfem} with $k=m=1$
  and adaptive time stepping \eqref{stepsize} on different meshes.}
\label{table:1}
\end{table}

We plot in Figure~\ref{fig:conv} the energy evolution 
over time on the four meshes, along with the evolution of the dissipation rates 
$$
-\frac{E_h^{n}-E_h^{n-1}}{\Delta t^n}, \text{ and }
\mathsf{Diss}_h^n/\Delta t^n,
$$ 
The results in Figure~\ref{fig:conv} numerically confirmed the energy stability
result in Theorem \ref{thm:sfem}. Moreover, the energy and dissipation rate
evolution are very similar on the four meshes, and we also observe that 
the computed  dissipation rate $(E_h^{n-1}-E_h^n)/\Delta t^n$ is very close to
and slightly larger than  
the physical dissipation rate $\mathrm{Diss}_h^n/\Delta t^n$, which is again
consistent with the equality \eqref{prop-s3} in Theorem \ref{thm:sfem}.
\begin{figure}[h!]
\centering
\includegraphics[width=0.9\textwidth]{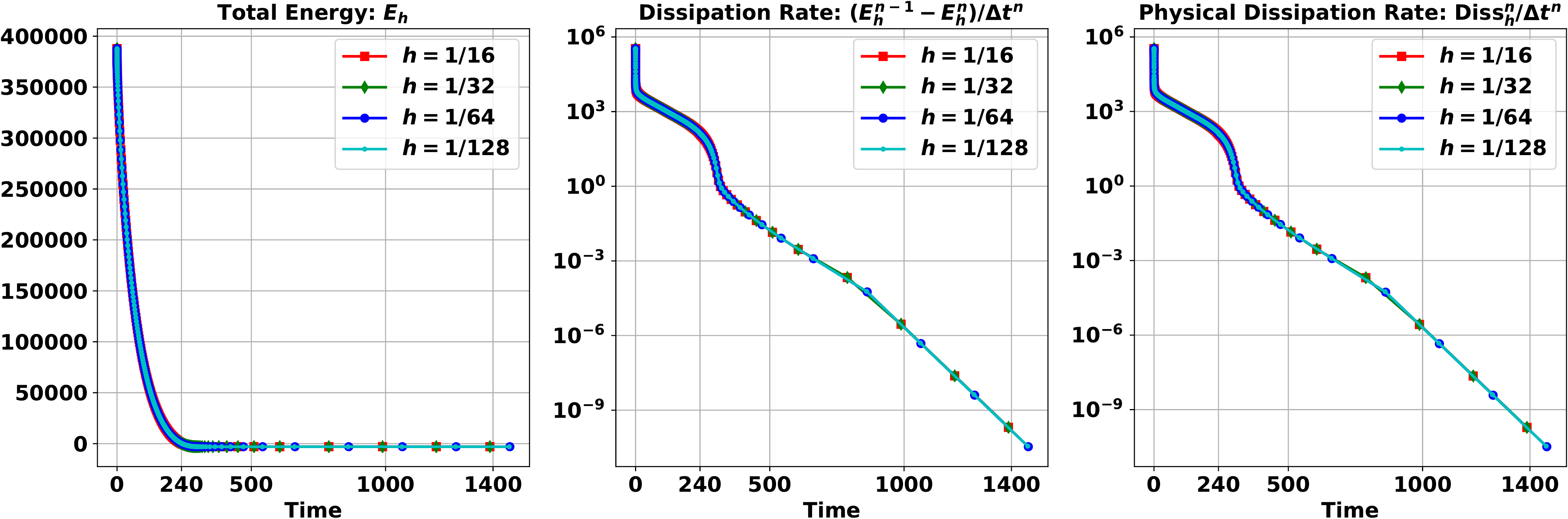}
\caption{
  \textbf{Example 2.} 
  The evolution of energy and dissipation rates
over time on the four meshes.}
\label{fig:conv}
\end{figure} 

Finally we plot in Figure~\ref{fig:conv2} the evolution of the time step size, the error
estimator 
$e^n = \left|(E_h^n-E_h^{n,lo})/{E_h^n}\right|$,  
and the number of Newton iterations. Again, the results on the four meshes are
very 
similar to each other, and the average number of Newton iterations
is about $3-4$. In particular, we observe that 
the time step size gradually increases from $\Delta t=10^{-4}$
to $\Delta t=\Delta t_{\max}=2$ till time $t=14$, then it stays at 
$\Delta t_{\max}=2$ for a period of time till around $t=235$, where
a few drops in time step size occur from $t = 235$ to $t=243$ due to a relative large error $e^n$.
Afterwards, $\Delta t$ gradually increases to $\Delta t_{\max}=200$.
The advantage of adaptive time stepping is also clearly illustrated in Figure~\ref{fig:conv2}
as a scheme with a uniform time step size $\Delta t = \Delta t^1=10^{-4}$ would
need about $1.4\times 10^7$ time steps to converge to steady state, while our adaptive 
time stepping scheme 
only needs about $200$ time steps as shown in Table~\ref{table:1}.
%with 
%initial time step size $10^{-4}$, which gradually increased to the maximum
%allowed step size $\Delta t =200$ towards the end of the simulation.
%the final time step size taken as large as $\Delta t=200$.

\begin{figure}[h!]
\centering
\includegraphics[width=0.9\textwidth]{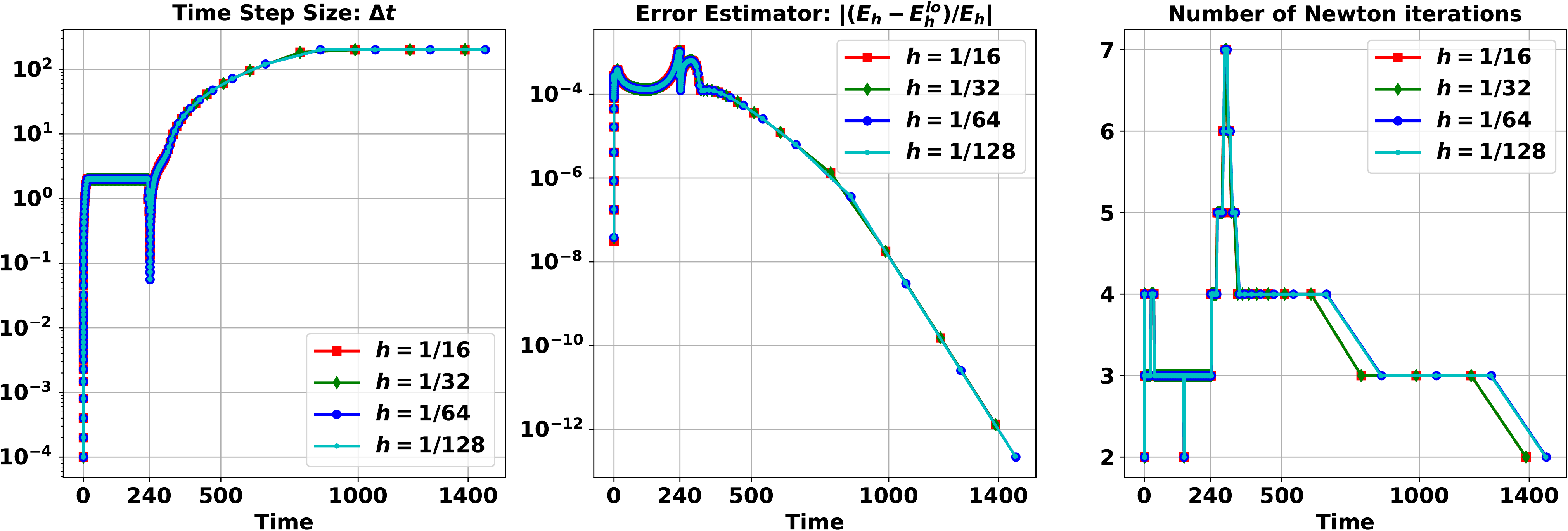}
\caption{
  \textbf{Example 2.} 
  The evolution of time step size $\Delta t^n$, error estimator $e^n$, and 
  Newton iteration number 
over time on the four meshes.}
\label{fig:conv2}
\end{figure}

\subsection*{Example 3}(Two-dimensional problem with discontinuous coefficients)
Here we solve a problem similar to Example 2 on a two-dimensional geometry.
The computational domain is shown in Figure~\ref{fig:2d}.
\begin{figure}[ht]
\centering
  \begin{tikzpicture}
 %   \draw[ultra thick,draw=black, pattern=dots] 
 %   (-28/4,-7/4)
 %   to (-18/4,-2/4) 
 %   to (-5/4,  -2/4)
 %   to (-5/4, -0.5/4)
 %   to (10/4, -0.5/4)
 %   to (25/4, -14/4)
 %   to (25/4, 14/4)
 %   to (10/4, 0.5/4)
 %   to (-5/4, 0.5/4)
 %   to (-5/4,  2/4)
 %   to (-18/4,2/4) 
 %   to (-28/4,7/4)
 %   to (-28/4,-7/4)
 %   ;
    
 %   \draw[thick,draw=red, pattern=dots] 
 %   (-28/4,0) 
 %   to (25/4,0);
 
    \draw[ultra thick,draw=black, pattern=north west lines] 
    (25/4, 0)
    to (25/4, 14/4)
    to (10/4, 0.5/4)
    to (-5/4, 0.5/4)
    to (-5/4,  2/4)
    to (-18/4,2/4) 
    to (-28/4,7/4)
    to (-28/4,0)
    to (25/4, 0)
    ;
    
%  \node at (-28/4,-7/4)[below,scale=1.2] {\tiny A};
%  \node at (-18/4,-2/4)[below,scale=1.2] {\tiny B};
%  \node at (-5/4,-2/4)[below,scale=1.2] {\tiny C};
% \node at (-4.3/4,-0.1/4)[below,scale=1.2] {\tiny D};
%  \node at (9.8/4,-0.1/4)[below,scale=1.2] {\tiny E};
%  \node at (25/4,-14/4)[below,scale=1.2] {\tiny F};
  \node at (-28/4,7/4)[above,scale=1.2] {\tiny A};
  \node at (-18/4,2/4)[above,scale=1.2] {\tiny B};
  \node at (-5/4,2/4)[above,scale=1.2] {\tiny C};
 \node at (-4.3/4,0.1/4)[above,scale=1.2] {\tiny D};
  \node at (9.8/4,0.1/4)[above,scale=1.2] {\tiny E};
  \node at (25/4,14/4)[above,scale=1.2] {\tiny F};
  \node at (-28/4,0)[below,scale=1.2] {\tiny O};
  \node at (25/4,0)[below,scale=1.2] {\tiny P}; 
\end{tikzpicture}
%  \vspace*{0.54cm}
\caption{
  \textbf{Example 3.} 
  Computational domain $\Omega$ (polygon).
Coordinates for the vertices: 
$A:(-28, 7)$, 
$B:(-18, 2)$, 
$C:(-5, 2)$, 
$D:(-5, 0.5)$, 
$E:(10, 0.5)$, 
$F:(25, 14)$, 
$O:(-28, 0)$, 
$P:(25, 0)$. 
  }
  \label{fig:2d}
\end{figure}
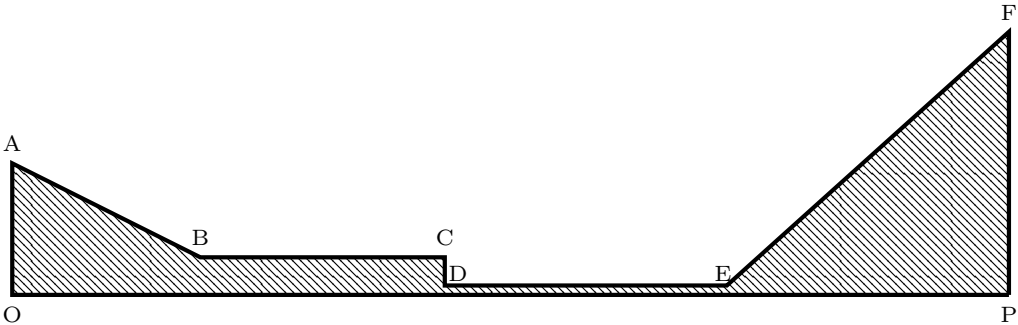
We solve the variable-coefficient PNP equations \eqref{pnpX} on the domain $\Omega$ 
using the same set of parameters as in Example 2, with the only exception  that 
the cross-section term is taken to be
$
A(x)=\pi r(x)
$
to take into account the 2D geometry.
Homogeneous Dirichlet boundary conditions 
are imposed on the left and right boundaries segments $OA$ and $PF$, and homogeneous Neumann
boundary conditions are imposed on the rest of the domain boundary.

The same second-order scheme \eqref{sfem} 
with polynomial degree $k=m=1$ and adaptive time stepping \eqref{stepsize}
is used here on three uniform unstructured meshes.
The coarse mesh with mesh size $h=1/16$ has 
$1.10\times 10^5$ elements which leads to a total of  $3.37\times 10^5$ degrees of freedom (DOFs).
The medium mesh with mesh size $h=1/32$ has 
$4.43\times 10^5$ elements which leads to a total of  $1.34\times 10^6$ DOFs.
The fine mesh with mesh size $h=1/64$ has 
$1.77\times 10^6$ elements which leads to a total of  
$5.34\times 10^6$ DOFs.
We use this example to show the performance of our method in a challenging
two-dimensional problem with variable coefficient and complex geometry.

%%%The same second-order scheme \eqref{sfem} 
%%%with polynomial degree $k=m=1$ and adaptive time stepping \eqref{stepsize}
%%%is used here on two uniform unstructured meshes.
%%%The coarse mesh with mesh size $h=1/16$ has 
%%%$1.10\times 10^5$ elements which leads to a total of  $3.37\times 10^5$ degrees of freedom (DOFs).
%%%The fine mesh with mesh size $h=1/32$ has 
%%%$4.43\times 10^5$ elements which leads to a total of  $1.34\times 10^6$ DOFs.
%%%%The fine mesh with mesh size $h=1/64$ has 
%%%%$1.77\times 10^6$ elements which leads to a total of  
%%%%$5.34\times 10^6$ DOFs.
%%%We use this example to show the performance of our method in a challenging
%%%two-dimensional problem with variable coefficient and complex geometry.

The computational results are very similar to the 1D case in Example 2.
Hence, we only present in Figure~\ref{fig:2d3} the evolution of $u_2=\log(c_2)$
at different times along the center cut line $y=0$. 
%These results are almost identical to the 1D results in Figure~\ref{fig:3}. 
In particular, we still observe that $c_2 = \exp(u_2)$ stays near {\it
zero} for  $x\in (-5, 10)$ for an extended period of time.

%\begin{figure}[h!]
%\centering
%\includegraphics[width=0.9\textwidth]{xP1T1M32phi}
%\caption{Electrostatic potential $\phi$ at different times obtained with 
%  the scheme \eqref{sfem} with $k=m=1$ on a uniform mesh with mesh size $h = 1/128$.}
%\label{fig:1}
%\end{figure} 
%
%\begin{figure}[h!]
%\centering
%\includegraphics[width=0.9\textwidth]{xP1T1M32u1}
%\caption{Logarithmic density $u_1 = \log(c_1)$ at different times obtained with 
%  the scheme \eqref{sfem} with $k=m=1$ on a uniform mesh with mesh size $h = 1/128$.}
%\label{fig:2}
%\end{figure} 

\begin{figure}[h!]
\centering
\includegraphics[width=0.9\textwidth]{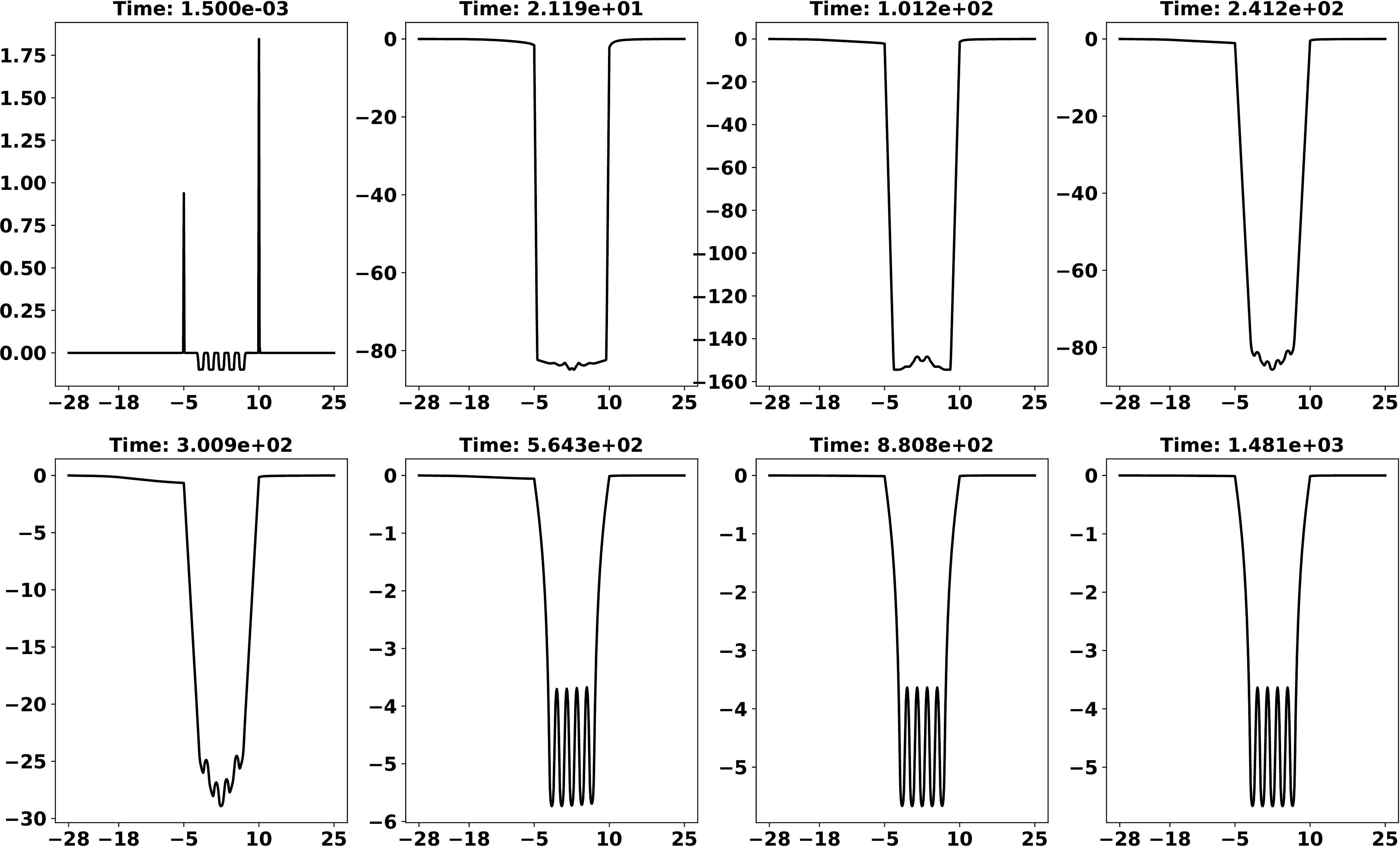}
\caption{
  \textbf{Example 3.} 
  Logarithmic density $u_2 = \log(c_2)$ along 
cut line $y=0$
  at different times obtained with 
  the scheme \eqref{sfem} with $k=m=1$ on a uniform unstructured mesh with mesh size $h
= 1/32$ ($4.43\times 10^5$ triangular elements).}
\label{fig:2d3}
\end{figure} 

We present in Table~\ref{table:2d1} the 
initial and final energy and the total number of 
time steps to reach the steady state, 
in  Figure~\ref{fig:2dconv} the energy evolution 
and the evolution of the dissipation rates,
%$$
%-\frac{E_h^{n}-E_h^{n-1}}{\Delta t^n}, \text{ and }
%\mathsf{Diss}_h^n/\Delta t^n,
%$$ 
and 
in Figure~\ref{fig:2dconv2} the evolution of the time step size, the error
estimator 
%$e^n = \left|(E_h^n-E_h^{n,lo})/{E_h^n}\right|$,  
and the number of Newton iterations. 
Again, all the results are very similar to the 1D case in Example 2.
%is about $3-4$. In particular, we observe a few time step size drops at around
%$t=240$ due to a relative large error $e^n$.
%The advantage of adaptive time stepping is also clearly illustrated in Figure~\ref{fig:conv2}
%as a scheme with a uniform time step size $\Delta t = \Delta t^1=10^{-4}$ would
%need about $1.4\times 10^7$ time steps to converge to steady state, while our adaptive 
%time stepping scheme only needs about $200$ time steps as shown in Table~\ref{table:1}.
%is recorded in Table~\ref{table:2d1}
%for four consecutively refined (uniform) meshes. 
%We clearly observe a convergence in the energies as the mesh is refined.
%Moreover, the total number of time steps to reach the steady state is similar on all four meshes.
\begin{table}[ht!]
\begin{center}
  \scalebox{1}
  {
  \begin{tabular}{ c | c | c|c} %\hline
    mesh size $h$ & $1/16$ & $1/32$  & $1/64$ \\
    \hline
    initial    energy & 
    402624.10 &
    395346.31 &
391737.06
    \\
    \hline
    final    energy & 
    -2918.1776 &
    -2969.7288 &
    -2995.4971
    \\
    \hline
    total time steps &
    216 & 213 & 212
  \end{tabular}
}
\end{center}
\caption{
  \textbf{Example 3.} 
  Initial and final energy, and total number of time steps 
  to reach the steady state for the scheme \eqref{sfem} with $k=m=1$
  and adaptive time stepping \eqref{stepsize} on different meshes.}
\label{table:2d1}
\end{table}

\begin{figure}[h!]
\centering
\includegraphics[width=0.9\textwidth]{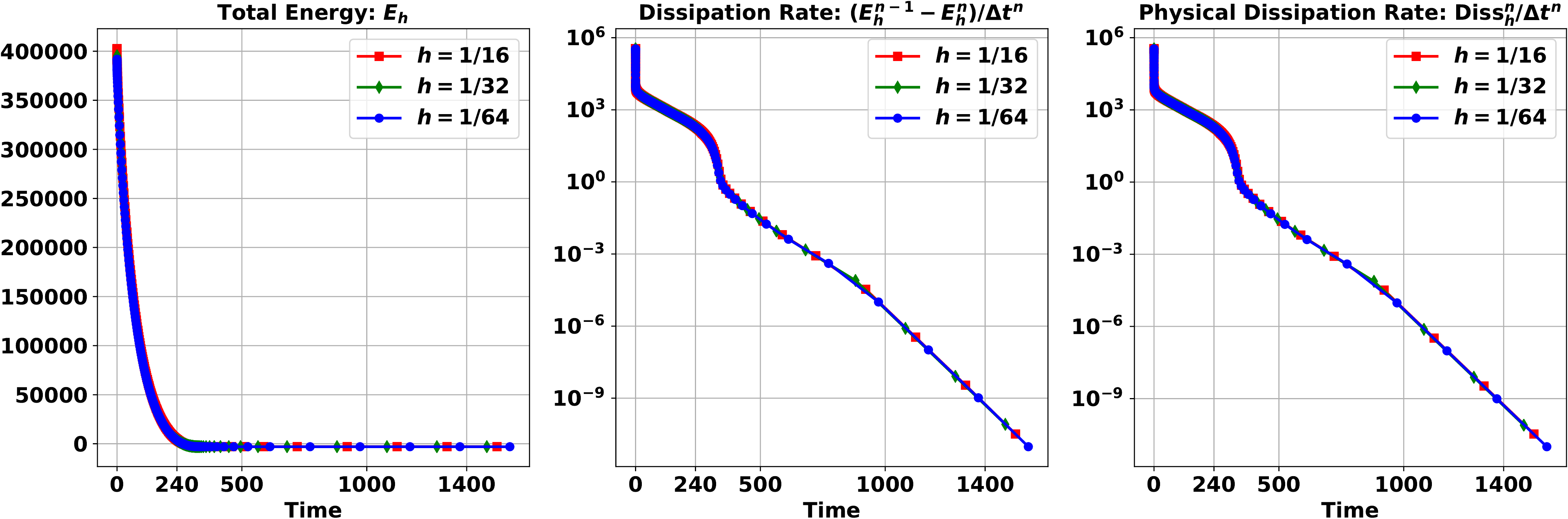}
\caption{
  \textbf{Example 3.} 
  The evolution of energy and dissipation rates
over time on the four meshes.}
\label{fig:2dconv}
\end{figure}

\begin{figure}[h!]
\centering
\includegraphics[width=0.9\textwidth]{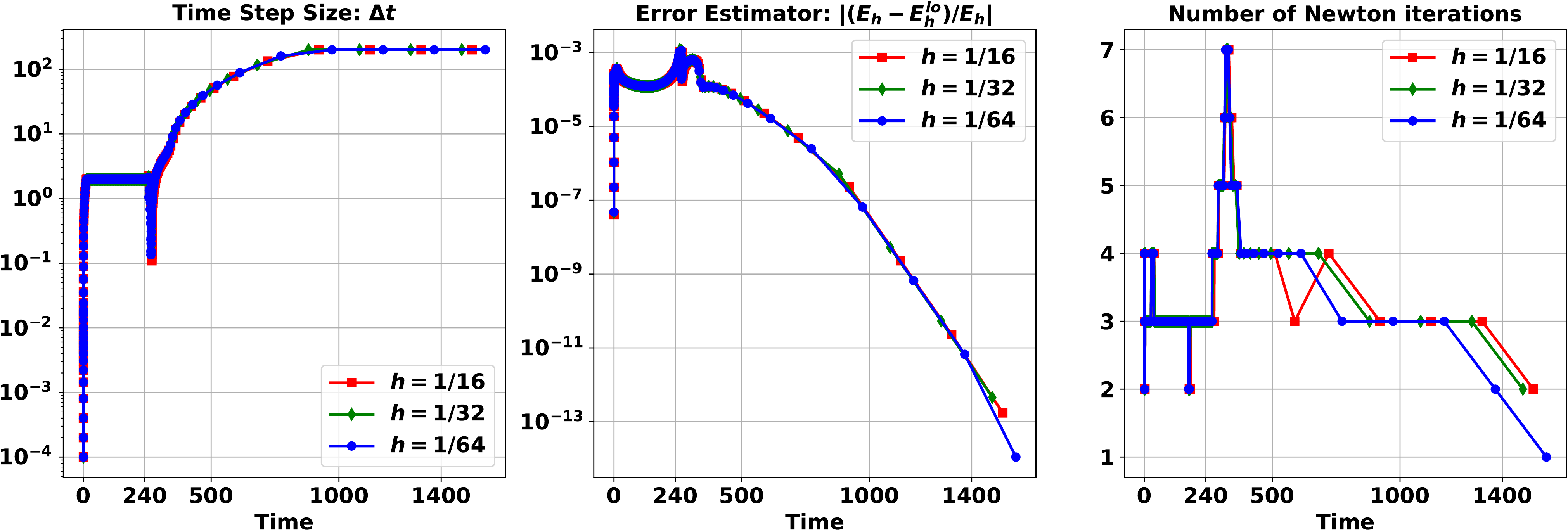}
\caption{
  \textbf{Example 3.} 
  The evolution of time step size $\Delta t^n$, error estimator $e^n$, and 
  Newton iteration number 
over time on the four meshes.}
\label{fig:2dconv2}
\end{figure} 

\section{Conclusion}
\label{sec:conclude}
We presented a novel class of high-order accurate, positivity preserving, and unconditionally 
energy stable
  space-time finite element schemes for the PNP  equations based on discretizing the {\it entropy variables} associated to the
  densities.
  To the best of our knowledge, this is the first class of arbitrarily
  high-order accurate schemes
  for PNP equations that is both positivity preserving and unconditionally
  energy stable.

  Our ongoing work consists of extending the STFEM framework to design
  positivity preserving and 
  unconditionally energy stable schemes for 
  other Wasserstein gradient flow problems, 
  and their coupling with incompressible
  flows like  electrokinetic problems. We are also planning to investigate on alternative
  finite element discretizations, efficient linear system solvers, and more computationally efficient temporal
  discretizations for the PNP equations in the future.

  \

  \textbf{Acknowledgement: } We would like to thank Christoph Lehrenfeld for
  suggesting using the space-time framework in the  {\sf ngsxfem} software.

  \bibliographystyle{siam}

\end{document}